\documentclass[hidelinks]{amsart}
\usepackage[T1]{fontenc}
\usepackage{hyperref}
\usepackage{paralist}
\usepackage{amsfonts, amsmath, amstext, amssymb, amsthm, graphicx, setspace, fullpage, mathtools}
\usepackage{thmtools}
\usepackage{multirow}
\usepackage{xcolor}
\usepackage{pifont} % for tag
\usepackage{tikz-cd}
\usepackage{cancel}

 %adjoint representation

\newcommand{\R}{\mathbb{R}}
\newcommand{\Z}{\mathbb{Z}}

\newcommand{\Id}{\text{Id}}

\newcommand{\Isom}{\text{Isom}}

 %nilradical

\newcommand{\hra}{\hookrightarrow}
\newcommand{\thra}{\twoheadrightarrow}
\newcommand{\xra}{\xrightarrow}

\newcommand{\Lra}{\Leftrightarrow}
\newcommand{\Ra}{\Rightarrow}

\theoremstyle{plain}
\newtheorem{thm}{Theorem}[section] %reset the counter at every new section
\newtheorem{lem}[thm]{Lemma}
\newtheorem{prop}[thm]{Proposition}
\newtheorem{cor}[thm]{Corollary}
\newtheorem{rem}[thm]{Remark}
\theoremstyle{definition}
\newtheorem{claim}{Claim}
\newtheorem{defn}{Definition}

\DeclarePairedDelimiter\abs{\lvert}{\rvert}
\DeclarePairedDelimiterX{\norm}[1]{\lVert}{\rVert}{#1}

\title{Simplicial Volume of Closed Locally Homogeneous Riemmannian Manifolds}
\author{Peng Hui How}
\address{Department of Mathematics, University of Chicago}
\email{hph@uchicago.edu}

\begin{document}
\maketitle

\begin{abstract}
We show (in Theorem~\ref{t_sv}) that every closed, locally homogeneous Riemannian manifold with positive simplicial volume must be homeomorphic to a locally symmetric space of non-compact type, giving a converse to \cite{lafont-schmidt:2006} within the scope of closed, locally homogeneous Riemannian manifolds.
This characterizes all closed locally homogeneous Riemannian manifolds with nonzero simplicial volume.
\end{abstract}

%\tableofcontents
\section{Introduction}
In \cite{gromov:1982}, Gromov introduced the following non-negative topological invariant for a connected, closed manifold:
\begin{defn}
Let $M$ be a connected, closed, $n$-manifold.

If $M$ is oriented, the \emph{simplicial volume} of $M$ is the $L^1$-norm of its fundamental class $[M] \in H_n(M;\R)$
\[
	\norm{M}
	= \inf_{\substack{c \in C_k(M;\R)\\ \partial{c}=0 \\{[c]}=[M]}}\norm{c}_1
\]
where $\norm{c}_1$ is the $L^1$-norm on $C_k(M;\R)$, i.e.
\[
  \norm{\sum_ic_i\sigma_i}_1 = \sum_i\abs{c_i},
\]
where $c_i \in \R$, $\sigma_i \in C(\Delta^k,M)$.

If $M$ is not oriented, we define $\norm{M}$ to be $\frac{1}{2}\norm{\overline{M}}$ where $\overline{M}$ is its orientation double cover.
\end{defn}

Before we proceed with our main result, we recall two definitions, in their bare minimal forms.
For a more conceptual and comprehensive formulation, see \cite{helgason:1979} and {\cite[\S~2.2]{eberlein:1996}}.
\begin{defn}
	Let $M$ be a complete connected Riemannian manifold.
	\begin{enumerate}
		\item We say that $M$ is \emph{locally homogeneous} if for every $p, q$ in $M$, there exists open neighborhoods $U \ni p$, $V \ni q$ and $f \in \Isom(U,V)$ such that
		$\begin{cases}
			f(U) = V
			\\
			f(p) = q
		\end{cases}$.
		\item We say that $M$ is a \emph{locally symmetric space of non-compact type} if it is diffeomorphic to $\Gamma \backslash G / K$ where $G$ is a centerless, semisimple Lie group, $K$ is a maximal compact subgroup in $G$, and $\Gamma$ is a discrete subgroup in $G$ that acts freely on $G/K$ via left action.
		In which case, the universal cover of $M$ is $G/K$.
		Note that such $M$ is locally homogeneous.
	\end{enumerate}
\end{defn}
\begin{rem}
	Since $\norm{M}$ is a topological invariant of $M$, to answer whether or not $\norm{M} = 0$, it suffices to analyze its topological type.
	In particular, it suffice to analyze its diffeomorphism type.
	In Lemma~\ref{lhm_as_double-coset-manifold}, we show that $M$ is diffeomorphic to a double coset manifold $\Gamma \backslash G / K$ where
	\begin{itemize}
		\item $G$ is a Lie group with finitely many connected components.
		\item $K$ is a compact subgroup of $G$.
		\item $\Gamma$ is a discrete subgroup of $G$ that acts freely on $G/K$.
	\end{itemize}
	Conversely, for any $(G,K,\Gamma)$ that satisfies the above conditions, since $K$ is compact, by basic Lie theory, $\Gamma \backslash G/K$ can be equipped with a locally homogeneous Riemannian metric.
	This allows us to treat locally homogeneous Riemannian manifolds as double coset manifolds interchangably.
\end{rem}

Let $M$ be a closed locally homogeneous Riemannian manifold.
A criterion for $\norm{M} > 0$ is of natural interest.
In \cite{lafont-schmidt:2006}, applying estimates of Connell-Farb in \cite{connell-farb:2001} on the barycenter map in higher rank, Lafont-Schmidt showed that every closed, locally symmetric space of non-compact type has positive simplicial volume.
In this paper, we prove the converse:
\begin{thm}\label{t_sv}
%Let $M$ be a closed, locally homogeneous Riemannian manifold.
Let $M$ be a closed topological manifold that admits a smooth structure that admits a locally homogeneous Riemannian metric.
Then $\norm{M} > 0$ if and only if $M$ is a closed, locally symmetric space of non-compact type.
\end{thm}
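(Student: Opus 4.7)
The plan is to prove the nontrivial ``$\Rightarrow$'' direction, since the reverse follows from topological invariance of $\norm{\cdot}$ combined with \cite{lafont-schmidt:2006}. Given a closed locally homogeneous $M$ with $\norm{M}>0$, Lemma~\ref{lhm_as_double-coset-manifold} realizes $M\cong\Gamma\backslash G/K$ with $G$ of finitely many components, $K$ compact, and $\Gamma$ discrete. Working with the identity component $G_0$, and passing to a finite cover of $M$ if convenient (which multiplies $\norm{M}$ by the degree and preserves membership in the locally-symmetric-of-noncompact-type class), I analyze the structure of the Lie algebra $\mathfrak{g}$.

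The central engine is Gromov's vanishing theorem: a closed $n$-manifold admitting a locally finite open cover of multiplicity at most $n$ by subsets whose images in $\pi_1(M)$ are amenable has zero simplicial volume. Using the Levi decomposition $\mathfrak{g}=\mathfrak{s}\ltimes\mathfrak{r}$ and the compact/noncompact splitting $\mathfrak{s}=\mathfrak{s}_c\oplus\mathfrak{s}_{nc}$, the sum $\mathfrak{h}:=\mathfrak{s}_c\oplus\mathfrak{r}$ is easily checked to be an ideal of $\mathfrak{g}$ whose integral $H\trianglelefteq G_0$ is a connected normal amenable subgroup (closed after perhaps another finite cover). The foliation of $\Gamma\backslash G$ whose leaves are right $H$-cosets is preserved by the right $K$-action --- since $xkH$ is again an $H$-coset --- and so descends to a foliation $\mathcal{F}$ of $M$ with leaves modelled on $H/(H\cap K)$. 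If $\dim\mathcal{F}>0$, I would cover $M$ by $\mathcal{F}$-saturated tubular neighborhoods whose $\pi_1$-images in $\pi_1(M)$ are amenable --- assembled from the amenable $\pi_1$ of a leaf extended by a finite holonomy group --- and apply Gromov's theorem to conclude $\norm{M}=0$, contradicting the hypothesis. Hence $H\subseteq K$, which forces $G_0$ (up to isogeny) to split as $S_{nc}\cdot H$, and $M$ reduces to $\Gamma'\backslash S_{nc}/K'$ with $K'=K\cap S_{nc}$. The same foliation idea applied to the compact fiber $K_{\max}/K'$ of the bundle $S_{nc}/K'\to S_{nc}/K_{\max}$ then rules out $K'\subsetneq K_{\max}$, so $K'$ is maximal compact and $M$ is a locally symmetric space of noncompact type.

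The most delicate step is producing an $\mathcal{F}$-saturated cover of $M$ with amenable $\pi_1$-images. The difficulty is that $\Gamma$ does not generally normalize $H$, so $\mathcal{F}$ is not the orbit foliation of a genuine group action on $M$; its leaves may not even be closed if $\Gamma\cdot H\subseteq G$ is not closed, and the holonomy could a priori push leaf-$\pi_1$ outside the amenable part of $\pi_1(M)$. I would handle this by building the cover upstairs on $\Gamma\backslash G$, where the right $H$-cosets really are $H$-orbits with manifestly amenable $\pi_1$-image, and transferring downstairs through the principal $K$-bundle $\Gamma\backslash G\to M$; compactness of $K$ bounds the multiplicity, while a standard long-exact-sequence argument controls $\pi_1$-images across the quotient. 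Auslander-type rigidity for compact homogeneous spaces would be invoked where needed to guarantee that the relevant subgroups close up after a further finite cover.
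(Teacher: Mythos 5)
Your overall strategy---split off a normal amenable subgroup $H$, show that a positive-dimensional ``amenable direction'' kills $\norm{M}$, and conclude that $M$ must be the quotient of the noncompact semisimple part by a maximal compact---is the same strategy the paper implements (via a structure theorem giving an honest fiber bundle $F \to M' \to B$ with $\pi_1(F)$ amenable and $B$ a closed locally symmetric space of non-compact type, followed by the fiber-bundle vanishing theorem of \cite[Exercise~14.13]{luck2002l2}). However, your proposal has two genuine gaps. The first is that your $\mathfrak{h}=\mathfrak{s}_c\oplus\mathfrak{r}$ is invisible to the (possibly infinite, discrete) center of the noncompact semisimple quotient, whereas the paper's kernel is solvable-by-compact-by-\emph{abelian} precisely because it also quotients by $Z(G_2)$. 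This is not a cosmetic omission: for $M=\Gamma\backslash\widetilde{SL_2(\R)}$ (the $\widetilde{SL_2}$ geometry), your $\mathfrak{h}$ is zero, your foliation is by points, Gromov vanishing gives you nothing, the maximal compact subgroup of $\widetilde{SL_2(\R)}$ is trivial, and your final paragraph would conclude that $M$ is a locally symmetric space of non-compact type --- which is false ($\pi_1(M)$ has infinite center, so it is not a cocompact lattice in a centerless semisimple group, and indeed $\norm{M}=0$). The normal amenable subgroup you must quotient out includes the center of the Levi factor, and detecting it requires working at the group level, not the Lie algebra level.

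The second gap is the one you yourself flag as delicate, and your proposed fixes do not close it. When $\Gamma H$ is not closed in $G$, the leaves of your foliation $\mathcal{F}$ are dense, there are no $\mathcal{F}$-saturated tubular neighborhoods other than all of $M$, and no amenable cover of controlled multiplicity is forthcoming; moreover, pushing an open cover down through the $K$-quotient $\Gamma\backslash G\to M$ does not preserve multiplicity bounds without further work. The substantive fact that rescues the argument is that the kernel $H$ is \emph{lattice hereditary} --- which the paper extracts from the Bieberbach--Auslander--Wang theorem (for the solvable-by-compact part) together with Raghunathan's result that the center of a semisimple group is lattice hereditary \cite[Proposition~5.18]{ragunathan:1972}. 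Lattice heredity is exactly what upgrades your foliation to a genuine fiber bundle over a \emph{closed} locally symmetric base (after passing to a finite-index torsion-free subgroup of $p(\Gamma)$ via Selberg's lemma), at which point one can dispense with Gromov's amenable-cover theorem entirely and quote the simpler vanishing theorem for fiber bundles with amenable fiber fundamental group. Naming ``Auslander-type rigidity'' is pointing at the right shelf, but the heredity statement has to be proved for the specific kernel (including the central part), and that is where the real content of the theorem lies.
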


A notion closely related to local homogeneity is Thurston model geometry; see \cite[\S3]{thurston:1997}.
In this language, Theorem~\ref{t_sv} immediately implies the following:
\begin{cor}\label{t_sv_cor}
	Let $M$ be a closed manifold modeled on some Thurston geometry.
	Then $\norm{M} > 0$ if and only if $M$ is homeomorphic to a closed locally symmetric space of non-compact type.
\end{cor}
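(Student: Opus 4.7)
The reverse implication is the theorem of Lafont--Schmidt cited above, so I focus on the forward direction, which I prove by contrapositive. The plan is to show that if $M$ is a closed locally homogeneous Riemannian manifold which is \emph{not} homeomorphic to a closed locally symmetric space of noncompact type, then $\norm{M} = 0$. By Lemma~\ref{lhm_as_double-coset-manifold} I may present $M \cong \Gamma \backslash G / K$ with $G$ a Lie group of finitely many components, $K$ compact, and $\Gamma$ discrete, cocompact, and acting freely on $G/K$; after passing to a finite cover (which only rescales $\norm{M}$ by a positive factor) I may further assume $G$ is connected.

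The strategy is a case analysis driven by the Levi decomposition $\mathfrak{g} = \mathfrak{r} \oplus \mathfrak{s}$ together with the splitting $\mathfrak{s} = \mathfrak{s}_c \oplus \mathfrak{s}_{nc}$ of the semisimple part into ideals of compact and noncompact type. If $\mathfrak{r} = 0$, $\mathfrak{s}_c = 0$, and $K$ is a maximal compact subgroup of $G$ (so that, after absorbing the discrete center, the triple $(G,K,\Gamma)$ realizes the definition of a locally symmetric space of noncompact type), then $M$ already contradicts the hypothesis. Hence at least one of these conditions must fail, and the plan is to exhibit enough additional structure in each failure scenario to force $\norm{M} = 0$, either by constructing a non-trivial smooth $S^1$-action on $M$ (whence $\norm{M} = 0$ by Yano's theorem) or by constructing an open cover of $M$ of multiplicity at most $\dim M$ by amenable subsets (whence $\norm{M} = 0$ by Gromov's vanishing theorem). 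When $K$ is not maximal compact, the universal cover $\widetilde{M} = G/K$ carries additional compact isometries coming from a maximal compact $K' \supsetneq K$ acting on the positive-dimensional fibers of $G/K \to G/K'$; a descent argument using the cocompactness of $\Gamma$ should promote these to a smooth circle action on $M$. The case $\mathfrak{s}_c \neq 0$ either reduces to the previous case (if $K$ does not already contain the compact factor $S_c$) or is absorbed by quotienting $G$ by $S_c$, which leaves $M$ unchanged since $S_c \subset K$.

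The main obstacle I foresee is the case $\mathfrak{r} \neq 0$: the radical $R \trianglelefteq G$ need not be centralized by $\Gamma$, so its right action on $\widetilde{M}$ does not descend to $M$. When moreover $\mathfrak{s}_{nc} = 0$, the whole of $G$ is solvable-by-compact, so $\Gamma$ is virtually solvable and hence amenable, and $\norm{M} = 0$ follows directly from Gromov's vanishing theorem for manifolds with amenable fundamental group. The genuinely delicate subcase is when both $\mathfrak{r}$ and $\mathfrak{s}_{nc}$ are nontrivial. Here I plan to invoke an Auslander-type structural result: cocompactness of $\Gamma$ in $G$ forces $\Gamma \cdot R$ to be closed in $G$ and, after a finite cover, $\Gamma \cap R$ to be a cocompact lattice in $R$; consequently, the foliation of $M$ by projected $R$-orbits has closed leaves that are solvmanifolds, hence have amenable fundamental group. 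Feeding this amenable foliation into Gromov's vanishing theorem should produce the desired amenable open cover of $M$ of multiplicity $\leq \dim M$, yielding $\norm{M} = 0$. Making this descent rigorous---in particular promoting an amenable foliation to an open cover of the correct multiplicity---is the step I expect to require the most care.
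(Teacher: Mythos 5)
First, note that the paper proves this corollary in one line: a closed manifold modeled on a Thurston geometry carries a locally homogeneous Riemannian metric, so Theorem~\ref{t_sv} applies verbatim. Your proposal instead re-proves the forward direction of Theorem~\ref{t_sv} from scratch by a case analysis on the Levi decomposition, which is a genuinely different route from the paper's (which establishes a single structural fiber bundle $F \to M' \to B$ with $\pi_1(F)$ amenable and $B$ locally symmetric of non-compact type, then quotes the fiber-bundle vanishing theorem once). Unfortunately your route contains a step that is actually false. In the ``genuinely delicate subcase'' you assert that cocompactness of $\Gamma$ forces $\Gamma \cdot R$ to be closed and $\Gamma \cap R$ to be a cocompact lattice in the radical $R$ after passing to a finite cover. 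The paper exhibits a counterexample to exactly this: take $G = \R \times SO(3)$ and $\Gamma = \{(n, A^n) : n \in \Z\}$ with $A$ of infinite order; then $\Gamma$ is a cocompact lattice but $\Gamma \cap R = \{0\}$, and no finite-index subgroup of $\Gamma$ repairs this. Crossing this example with $SL(2,\R)$ and a cocompact lattice therein places the failure squarely in your mixed case $\mathfrak{r} \neq 0$, $\mathfrak{s}_{nc} \neq 0$. The correct subgroup to foliate (or rather, fiber) by is not $R$ but the larger $H = \ker(G \to \overline{G})$, where $\overline{G}$ is the quotient of $G$ by the radical, then the compact radical of the semisimple part, then the center; this $H$ \emph{is} lattice-hereditary (Bieberbach--Auslander--Wang plus Raghunathan's Proposition~5.18), is still amenable, and yields an honest fiber bundle over a closed locally symmetric space of non-compact type --- which also lets you avoid the step you rightly flag as dangerous, namely converting an amenable foliation into an amenable open cover of multiplicity at most $\dim M$.

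A second, smaller gap: when $K$ is not maximal compact, there is in general no circle action on $M$ to which Yano's theorem applies. A subgroup $K' \supsetneq K$ acts on $G/K$ on the right only through $N_{K'}(K)/K$, which can be finite even when $\dim(K'/K) > 0$ (e.g.\ $K = SO(2) \subset K' = SO(3)$ gives $N_{K'}(K)/K \cong \Z/2$ while the fiber $K'/K \cong S^2$). What does survive is the bundle $\Gamma'\backslash G/K \to \Gamma'\backslash G/K'$ with fiber $K'/K$, whose fundamental group is amenable, so this case is again absorbed by the fiber-bundle vanishing theorem rather than by a circle action. The cases you handle via amenability of $\Gamma$ when $\mathfrak{s}_{nc} = 0$ are correct, but as written the proposal does not close the two gaps above.
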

\begin{rem}
	Let $n = \dim(M)$.
	The special cases of $n = 2,3$ of Corollary~\ref{t_sv_cor} were essentially solved by Gromov.
	The special case of $n = 4$ (resp. $n = 5$) was done by Zhang in \cite{zhang:2017} (resp. Neofytidis-Zhang in \cite{neofytidis-zhang:2021}), based on work on classification of dim-4 (resp. 5) geometry by Filipkiewicz (resp. Geng).
\end{rem}

The proof of Theorem~\ref{t_sv} relies heavily on the following structure theorem of closed, locally homogeneous Riemannian manifolds, which we will prove.
Before we state the theorem, we give the following definition, which will be used repeatedly in this paper.
\begin{defn}
We say that a group $G$ is \emph{$X$-by-$Y$} if there exists a short exact sequence
\[
	1 \to N \to G \to Q \to 1
\]
where $N$ satisfies property $X$ and $Q$ satisfies property $Y$.
For example, $G$ is said to be solvable-by-compact if there exists a short exact sequence as above where $N$ is solvable and $Q$ is compact.
\end{defn}
\begin{rem}\label{group-by-extension-remark}
	The binary operation given by "-by-" need not be associative, i.e. a ($X$-by-$Y$)-by-$Z$ group need a not be a $X$-by-($Y$-by-$Z$) group.
	However, in the special case when $G$ is the kernel of the composition of a sequence of surjective group homomorphisms, i.e.
	\[
		G = \ker(H \stackrel{p_1}{\thra} H_1 \stackrel{p_1}{\thra} H_2 \stackrel{p_1}{\thra} H_3),
	\]
	where $\ker(p_1)$ (resp. $\ker(p_2)$, $\ker(p_3)$) satisfies property $X$ (resp. $Y$, $Z$),
	then $G = \ker(p_3 \circ p_2 \circ p_1) = \ker(p_3 \circ (p_2 \circ p_1)) = \ker((p_3 \circ p_2) \circ p_1)$, and thus the both following short exact sequences hold:
	\begin{enumerate}
		\item $1 \to \ker(p_2 \circ p_1) \to G \to \ker(p_3) \to 1$, which implies that $G$ is ($X$-by-$Y$)-by-$Z$.
		\item $1 \to \ker(p_1) \to G \to \ker(p_3 \circ p_2) \to 1$, which implies that $G$ is $X$-by-($Y$-by-$Z$).
	\end{enumerate}
	In this case, since dropping the parenthesis does not introduce ambiguity, we would simply denote $G$ as a \emph{$X$-by-$Y$-by-$Z$} group.
\end{rem}

We are now ready to state the structure theorem for closed, locally homogeneous Riemannian manifolds.
\begin{thm}[Structure theorem for closed, locally homogeneous Riemannian manifolds]\label{lhm}
	%Let $M$ be a closed locally homogeneous Riemannian manifold.
	Let $M$ be a closed topological manifold that admits a smooth structure that admits a locally homogeneous Riemannian metric.
	Then $M$ admits a finite cover $M'$ such that there exists a fiber bundle
	\[
		F \to M' \to B
	\]
	where:
	\begin{enumerate}
		\item $F$ is a closed locally homogeneous Riemannian manifold $\Gamma \backslash G / K$ given by a (solvable-by-compact-by-abelian)-by-compact Lie group $G$.
		\item $B$ is a closed locally symmetric space of non-compact type.
	\end{enumerate}
	We denote such $M'$ as a \emph{fibered cover} of $M$.
\end{thm}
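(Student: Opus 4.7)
The plan is to construct the desired fibration from a Lie-theoretic projection of $G$ onto its non-compact semisimple quotient. By Lemma~\ref{lhm_as_double-coset-manifold}, I may write $M \cong \Gamma \backslash G / K$ with $G$ having finitely many components, $K \subset G$ compact, and $\Gamma$ discrete acting freely; passing to the finite cover corresponding to the identity component $G^0$, I may assume $G$ is connected. I would then take the Levi decomposition $G = R \rtimes L$ (with $R$ the solvable radical and $L$ a Levi semisimple subgroup) and the almost direct product $L = L_c \cdot L_{nc}$ of compact-type and non-compact-type semisimple factors, and form the subgroup $H := R \cdot L_c$. Because $R \trianglelefteq G$ and $L_c$ is characteristic in $L$, a direct computation shows $H$ is closed and normal in $G$; moreover $G/H \cong L/L_c$ is, modulo a finite central subgroup, the non-compact semisimple group $L_{nc}$.

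The geometric step is to promote the surjection $p \colon G \twoheadrightarrow G/H$ to a fiber bundle
\[
(\Gamma \cap H) \backslash H / (K \cap H) \;\to\; \Gamma \backslash G / K \;\to\; p(\Gamma) \backslash (G/H) / p(K).
\]
The main obstacle is showing that $p(\Gamma)$ is discrete in $G/H$, equivalently that $\Gamma H$ is closed in $G$. In general the projection of a lattice to a semisimple quotient can fail to be discrete (for example, irreducible lattices in $\SL_n(\R) \times \text{SU}(n)$ project densely onto each factor), so I plan to pass to a further finite cover via a Selberg-type argument, combined with Auslander's structure theorem for lattices in connected Lie groups (controlling $\Gamma \cap R$ as a lattice in $R$), arranging that $\Gamma$ becomes commensurable with a lattice that splits along the $H$-factor. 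Once discreteness holds, the base $p(\Gamma) \backslash (G/H) / p(K)$, after further quotienting by the discrete center of $G/H$ to pass to the adjoint semisimple group, is a closed locally symmetric space of non-compact type, verifying condition (2).

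For condition (1), the Lie group $H = R \rtimes L_c$ of the fiber has the required structure visible by descending through the standard filtration: the outer compact quotient is $L_c$ with kernel $R$; the radical $R$ has abelian quotient $R/[R,R]$ with nilpotent kernel $[R,R]$; and $[R,R]$, being nilpotent, is solvable-by-compact (via extracting its maximal central torus, or trivially with trivial compact quotient). Invoking Remark~\ref{group-by-extension-remark}, this assembles into the unambiguous string ``(solvable-by-compact-by-abelian)-by-compact'' of the theorem, completing the plan. The hard technical step, as flagged, is the discreteness of $p(\Gamma)$; the remaining assembly is Lie-theoretic bookkeeping using standard results on the structure of connected Lie groups and their lattices.
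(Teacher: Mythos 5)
Your overall strategy---reduce to connected $G$, project onto the non-compact semisimple quotient, and fiber $M$ over the associated locally symmetric space---is the same as the paper's, and you correctly flag the discreteness of the projected lattice as the crux. However, there are two genuine gaps. First, your proposed base $p(\Gamma)\backslash (G/H)/p(K)$ is not a locally symmetric space unless $p(K)$ happens to be a \emph{maximal} compact subgroup of $G/H$: for instance if $K$ is trivial and $G$ is semisimple, your ``base'' is $\Gamma\backslash G$ itself, not $\Gamma\backslash G/K^{\max}$. The extra compact directions of $G/H$ must be pushed into the fiber, i.e.\ one must choose a maximal compact $\overline{K}^{\max}\supseteq p(K)$ and take the fiber to be $(\Gamma'\cap P)\backslash P/K$ with $P=p^{-1}(\overline{K}^{\max})$; this is why the fiber's group in the theorem is (solvable-by-compact-by-abelian)-\emph{by-compact} rather than just your $H=R\rtimes L_c$. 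Relatedly, the center of $L_{nc}$ (which can be infinite, e.g.\ for covers of $\SL_2(\R)$) must also be absorbed into $H$ before projecting, and one needs Raghunathan's result that the center is lattice-hereditary to keep the image a lattice in the adjoint group; ``further quotienting the base by the center'' afterwards is not a fibration-preserving operation without this.

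Second, your proposed resolution of the discreteness problem---passing to a finite cover so that $\Gamma$ becomes commensurable with a lattice that \emph{splits} along $H$---would fail: cocompact lattices in connected Lie groups generally do not virtually split along the radical or along central extensions (irreducible lattices in $\widetilde{\SL_2(\R)}$, or lattices in Heisenberg-by-semisimple groups, give counterexamples), and no finite cover changes this. The correct and sufficient statement is that $H$ is \emph{lattice hereditary}: this is exactly the Bieberbach--Auslander--Wang theorem for the solvable-radical-times-compact-Levi-factor part, plus Raghunathan's Proposition~5.18 for the center, and it yields directly that $p(\Gamma)$ is a cocompact lattice in $\overline{G}$ with no splitting needed. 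Finally, note that Selberg's lemma enters not for discreteness but for torsion-freeness of $p(\Gamma)$ (so that it acts freely on the symmetric space); your proposal conflates these two uses.
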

\begin{rem}
Theorem~\ref{lhm} is likely known to experts, but I could not find it in this form in the literature.
A similar result (in a more general context, with a longer proof and more indirect specification of the fiber to take care of higher generality) was proved by van Limbeek in \cite{vanLimbeek:2014}.
\end{rem}

\subsection*{Acknowledgement}
The author would like to thank her advisor Benson Farb for suggesting this problem and his helpful discussions, comments, editing of drafts, and numerous encouragement throughout the project.
She would also like to express her utmost gratitude to Wouter van Limbeek for his insightful discussions, indispensible technical help, as well as comments to the earlier drafts.
Last but not least, she is extremely grateful to Clara L\"oh, for her careful reading and various suggestions to an earlier draft of the paper.

\section{Theorem~\ref{lhm} implies Theorem~\ref{t_sv}}
Our goal in this section is to prove Theorem~\ref{t_sv} using Theorem~\ref{lhm}.
An immediate corollary of Theorem~\ref{lhm} is the following, which we will use to prove Theorem~\ref{t_sv}.
\begin{cor}\label{lhm_pi_1_F_amenable}
	Let $M$ be a closed locally homogeneous Riemannian manifold.
	Then $M$ admits a finite cover $M'$ such that there exists a fiber bundle
	\[
		F \to M' \to B
	\]
	where:
	\begin{enumerate}
		\item $F$ is a closed connected (possibly trivial) manifold with amenable $\pi_1(F)$.
		(Recall that a locally compact topological group $G$ is said to be \emph{amenable} if $G$ has a fixed point for every continuous affine action on a nonempty $G$-space which is a compact, convex subset of a locally convex topological vector space.)
		\item $B$ is a closed locally symmetric space of non-compact type.
	\end{enumerate}
	We denote such $M'$ as a \emph{fibered cover} of $M$.
\end{cor}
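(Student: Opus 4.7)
The plan is to deduce the corollary directly from Theorem~\ref{lhm}: the only new content to verify is that the fiber $F = \Gamma \backslash G / K$ produced there has amenable fundamental group. Theorem~\ref{lhm} already supplies a fiber bundle $F \to M' \to B$ with $B$ of the required type, and $F$ is closed because it is the fiber of a fiber bundle of closed manifolds. After passing to a further finite cover of $M'$ if necessary, we may assume that $F$ is connected (it has only finitely many components, being a closed manifold).

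First I would verify that the Lie group $G$ is amenable. The class of amenable locally compact groups is closed under extensions and contains all compact, abelian, and solvable groups; applying this closure property three times shows that any (solvable-by-compact-by-abelian)-by-compact Lie group is amenable. Consequently the discrete subgroup $\Gamma \le G$ is amenable as well.

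Next I would control the kernel of the natural surjection $\pi_1(F) \thra \Gamma$ coming from the regular covering $G/K \to F$ with deck group $\Gamma$. Covering-space theory yields the short exact sequence
\[
    1 \to \pi_1(G/K) \to \pi_1(F) \to \Gamma \to 1,
\]
so it remains to verify that $\pi_1(G/K)$ is amenable. The homotopy long exact sequence of the fibration $K \to G \to G/K$, together with the standard facts that $\pi_1$ of a connected Lie group is finitely generated abelian and that $\pi_0(K)$ is finite, implies that $\pi_1(G/K)$ is an extension of a finite group by a finitely generated abelian group, hence virtually abelian and amenable. Since amenability is closed under extensions, $\pi_1(F)$ is amenable, completing the corollary.

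I do not anticipate any serious obstacle: this is a routine assembly of Theorem~\ref{lhm}, the closure properties of amenable groups, and the homotopy long exact sequence for the fibration $K \to G \to G/K$. The only mildly delicate point is the reduction to connected $F$, which is carried out by passing to the cover of $M'$ corresponding to a finite-index subgroup of $\pi_1(M')$ picking out a chosen component of $F$.
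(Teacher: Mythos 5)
Your proposal is correct and follows essentially the same route as the paper: amenability of $G$ (hence of $\Gamma$) from the closure properties of amenability, the covering-space exact sequence $1 \to \pi_1(G/K) \to \pi_1(F) \to \Gamma \to 1$, and the homotopy exact sequence of $K \to G \to G/K$ to handle $\pi_1(G/K)$. The one small divergence is in your favor: you conclude only that $\pi_1(G/K)$ is (abelian)-by-(finite) using $\pi_0(K)$, whereas the paper asserts it is abelian outright, which strictly requires $K$ connected; since virtually abelian groups are amenable, your version is the more careful one.
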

\begin{proof}
	We use the notations in Theorem~\ref{lhm}.
	The only thing to be proven is that $\pi_1(F)$ is amenable.

	To do so, we recall the following:
	\begin{prop}[{\cite[\S12.1,\S12.2]{morris:2001}}]\label{amenable}
		\begin{enumerate}
			\item The following types of groups are amenable: compact, solvable.
			\item The amenable property is preserved under: group extension, quotients, taking closed subgroups, taking finite-index subgroups.
		\end{enumerate}
	\end{prop}
	Continuing with the proof of Corollary~\ref{lhm_pi_1_F_amenable}, since $G$ is (solvable-by-compact-by-abelian)-by-compact, by Proposition~\ref{amenable}, $G$ is amenable.

	To prove that $\pi_1(F)$ is amenable, note that basic covering space theory yields the short exact sequence of groups
	\[
		1 \to \pi_1(G/K) \to \pi_1(F) \to \Gamma \to 1.
	\]
	Since $\Gamma$ is a closed subgroup of $G$, by Proposition~\ref{amenable}, $\Gamma$ is amenable.
	By Proposition~\ref{amenable}, since the amenable property is closed under group extension, it suffices to show that $\pi_1(G/K)$ is abelian.
	This holds, since the long exact sequence of fibration for $K \to G \to G/K$ yields the short exact sequence of groups
	\[
		1 \to \pi_1(K) \to \pi_1(G) \to \pi_1(G/K) \to 1,
	\]
	By elementary algebraic topology, the fundamental group of any topological group is abelian.
	Thus, in particular, $\pi_1(G)$ is abelian, and thus $\pi_1(G/K)$ is also abelian.
	Since $\pi_1(G/K)$ is abelian, by Proposition~\ref{amenable}, it is amenable, as required.
\end{proof}

Now, still assuming Theorem~\ref{lhm}, which as we just showed implies Corollary~\ref{lhm_pi_1_F_amenable} as a special case, we prove the following, which contains Theorem~\ref{t_sv}.
\begin{lem}\label{lhm_tfae}
	Let $M$ be a closed locally homogeneous Riemannian manifold, and $M'$ a fibered cover (as in Theorem~\ref{lhm}) of $M$.
	Then the following are equivalent:
	\begin{enumerate}[(1)]
		%\item $F$ is trivial (i.e. a single point).
		\item $M'$ is a closed locally symmetric space of non-compact type.
		\item $M$ is a closed locally symmetric space of non-compact type.
		\item $\norm{M'} > 0$.
		\item $\norm{M} > 0$.
	\end{enumerate}
\end{lem}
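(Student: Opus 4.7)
The plan is to close the cycle $(1)\Rightarrow(2)\Rightarrow(4)\Rightarrow(3)\Rightarrow(1)$, with the bulk of the work concentrated in $(3)\Rightarrow(1)$.

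For $(1)\Rightarrow(2)$: the finite cover $M'\to M$ identifies the universal covers of the two spaces, so if $M'$ has universal cover $G/K$ with $G$ centerless semisimple and $K$ maximal compact, then $M=\Gamma\backslash G/K$ for $\Gamma$ the discrete deck group of $G/K\to M$, and $\Gamma$ acts freely on $G/K$ because $M$ is a manifold. $(2)\Rightarrow(4)$ is the Lafont-Schmidt theorem \cite{lafont-schmidt:2006}. $(4)\Leftrightarrow(3)$ follows from multiplicativity of simplicial volume under finite covers: $\|M'\|=[M':M]\cdot\|M\|$.

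For $(3)\Rightarrow(1)$, I would apply Corollary~\ref{lhm_pi_1_F_amenable} to realize $M'$ as the total space of a fiber bundle $F\to M'\xrightarrow{\pi}B$ where $F$ is closed connected with amenable $\pi_1(F)$ and $B$ is a closed locally symmetric space of non-compact type, and then prove that $F$ must be a point. The homotopy long exact sequence of the fibration yields $\pi_1(F)\to\pi_1(M')\to\pi_1(B)\to 1$, so the image of $\pi_1(F)$ in $\pi_1(M')$ is a normal subgroup that is a quotient of an amenable group, hence amenable. Gromov's mapping theorem then gives that $\pi^*\colon H_b^*(B)\to H_b^*(M')$ is an isometric isomorphism of bounded cohomology in all degrees.

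Set $n=\dim M'$ and suppose for contradiction that $\dim F\geq 1$. Then $\dim B<n$, so $H_n(B;\R)=0$ and hence $\pi_*[M']=0$. For any $\alpha\in H_b^n(M')$ we may write $\alpha=\pi^*\alpha_0$, whence $\langle\alpha,[M']\rangle=\langle\alpha_0,\pi_*[M']\rangle=0$. Gromov's duality principle then forces $\|M'\|=0$, contradicting $(3)$. Hence $\dim F=0$, so $F$ is a point, $\pi$ is a diffeomorphism, and $M'\cong B$ is locally symmetric of non-compact type.

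The main obstacle is locating the two Gromov ingredients in the precise form needed: the mapping theorem (amenable normal subgroups of $\pi_1$ are invisible to bounded cohomology) and the duality principle (positivity of $\|\cdot\|$ is detected by a bounded cohomology class pairing nontrivially with the fundamental class). Once these are in place, the dimension-counting argument for $(3)\Rightarrow(1)$ is immediate, and the remaining three implications reduce to standard bookkeeping with coverings, simplicial volume multiplicativity, and the Lafont-Schmidt theorem.
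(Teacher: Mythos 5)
Your proposal is correct and follows essentially the same route as the paper: (1)$\Leftrightarrow$(2) and (3)$\Leftrightarrow$(4) by standard covering-space facts and multiplicativity of simplicial volume, the forward direction via Lafont--Schmidt, and (3)$\Rightarrow$(1) by applying Corollary~\ref{lhm_pi_1_F_amenable} and showing that a bundle with connected, positive-dimensional fiber having amenable fundamental group forces the simplicial volume of the total space to vanish. The only difference is that the paper cites this last vanishing statement as {\cite[Exercise~14.13]{luck2002l2}}, whereas you supply its proof directly via Gromov's mapping theorem and the duality principle, which is a correct (and self-contained) substitute for that citation.
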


\begin{proof}
Since $M'$ is a finite cover of $M$, the following holds:
\begin{enumerate}
	\item (1) $\Lra$ (2):
	This is an immediate consequence of the following facts:
	\begin{enumerate}
		\item Their universal covers agree, i.e. $\widetilde{M'} = \widetilde{M}$.
		\item The following are equivalent for a Riemannian manifold $N$:
		\begin{enumerate}
			\item $N$ is a locally symmetric space of non-compact type.
			\item its universal cover $\widetilde{N}$ is a symmetric space of non-compact type.
		\end{enumerate}
		\item The compactness property is preserved under finite-covering.
	\end{enumerate}
	\item (3) $\Lra$ (4):
	This is an immediate consequence of the following remark:
	\begin{rem}[{\cite[\S~0.2]{gromov:1982}}]\label{simplicial_vol_finite_cover}
	Let $N \thra M$ be a finite $d$-sheeted covering.
	Then $\norm{N} = d\norm{M}$.
	In particular, $\norm{N} = 0$ $\Lra$ $\norm{M} = 0$.
	\end{rem}
\end{enumerate}
As stated in the introduction, (1) $\Ra$ (3) was proven in \cite[Main Theorem]{lafont-schmidt:2006}.
It remains to prove (3) $\Ra$ (1).

By Corollary~\ref{lhm_pi_1_F_amenable}, there is a fiber bundle
\[
	F \to M' \to B
\]
where:
\begin{itemize}
	\item $\pi_1(F)$ is amenable.
	\item $B$ is a closed locally symmetric space of non-compact type.
\end{itemize}
According to {\cite[Exercise~14.13]{luck2002l2}},
if there is a fiber bundle
\[
	F \hra E \thra B
\]
with $\pi_1(F)$ amenable and $F$ nontrivial (i.e. $F$ connected with $\dim(F) > 0$), then $\norm{E} = 0$.

This implies that if $\norm{M} > 0$, $F$ must be a point, i.e. $M'$ is a closed locally symmetric space of non-compact type.
This completes the proof of Lemma~\ref{lhm_tfae}.
\end{proof}

\section{The Structure Theorem for Closed, Locally Homogeneous Riemannian Manifolds}
Our goal in this section is to prove Theorem~\ref{lhm} (the structure theorem for closed locally homogeneous Riemannian manifolds).
To start off, it is crucial to note that a locally homogeneous Riemannian manifold is diffeomorphic to a double coset manifold.
\begin{lem}\label{lhm_as_double-coset-manifold}
	Let $M$ be a locally homogeneous Riemannian manifold.
	Then, there exists
	\begin{itemize}
		\item $G$ a Lie group with finitely many connected-components
		\item $K$ a compact subgroup of $G$
		\item $\Gamma$ a discrete subgroup of $G$ that acts freely on $G/K$
	\end{itemize}
	where
	\begin{enumerate}
		\item The universal cover $\widetilde{M}$ of $M$ is diffeomorphic to $G/K$, and
		\item $M$ is diffeomorphic to $\Gamma \backslash G/K$.
	\end{enumerate}
\end{lem}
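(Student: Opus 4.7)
The plan is to use classical Riemannian geometry to realize $\widetilde{M}$ as a homogeneous space, and then to bring in the deck-transformation action of $\pi_1(M)$ to obtain the desired double-coset presentation $\Gamma \backslash G / K$.

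First, I would invoke Singer's theorem: a complete, simply connected, locally homogeneous Riemannian manifold is globally homogeneous. Local homogeneity of $M$ lifts to $\widetilde{M}$, and completeness passes to the universal cover, so $\Isom(\widetilde{M})$ acts transitively on $\widetilde{M}$. By the Myers-Steenrod theorem, $\Isom(\widetilde{M})$ is then a Lie group whose point stabilizers are compact. Let $H := \Isom(\widetilde{M})^0$ be the identity component. Since the $H$-orbit of a basepoint $\tilde{p}$ is open (by dimension count, using that the full isometry group is transitive) and closed (by connectedness of $\widetilde{M}$), $H$ itself acts transitively, so $\widetilde{M} \cong H/K_H$ where $K_H := \text{Stab}_H(\tilde{p})$ is compact.

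Second, I would identify $\Gamma := \pi_1(M)$ with $\Deck(\widetilde{M} \to M)$. Since the pullback of the Riemannian metric to $\widetilde{M}$ is $\Gamma$-invariant, every deck transformation is an isometry; hence $\Gamma$ embeds as a discrete subgroup of $\Isom(\widetilde{M})$ acting freely and properly discontinuously. This yields the diffeomorphism $M \cong \Gamma \backslash \widetilde{M}$ demanded by (1) and (2).

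The last step is to assemble a Lie group $G \subset \Isom(\widetilde{M})$ with finitely many connected components that contains both $H$ (to retain transitivity on $\widetilde{M}$) and $\Gamma$; then $K := \text{Stab}_G(\tilde{p})$ is compact and $M \cong \Gamma \backslash G / K$. The natural candidate is $G := H \cdot \Gamma$, which is a Lie subgroup since $H$ is open in $\Isom(\widetilde{M})$ and $\Gamma$ is discrete. I expect the main obstacle to be showing $G/H \cong \Gamma/(\Gamma \cap H)$ is finite, i.e.\ that the image of $\Gamma$ in $\pi_0(\Isom(\widetilde{M}))$ is finite. I would handle this by combining the properness of the $\Gamma$-action with classical structure results on isometry groups of homogeneous Riemannian manifolds: $\pi_0(\Isom(\widetilde{M}))$ acts on the compact stabilizer $K_H$ by outer automorphisms, and the resulting constraints, together with discreteness of $\Gamma$ inside $\Isom(\widetilde{M})$, force the image of $\Gamma$ in the component group to be finite. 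Once $G$ is in hand, setting $K := \text{Stab}_G(\tilde{p})$ closes the argument, since $K$ is closed in a compact stabilizer and hence compact, while the $\Gamma$-action on $G/K \cong \widetilde{M}$ is free by construction.
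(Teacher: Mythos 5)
Your first two steps (Singer to make $\widetilde M$ homogeneous, Myers--Steenrod to make $\Isom(\widetilde M)$ a Lie group with compact stabilizers, and the identification of $\Gamma=\pi_1(M)$ with the deck group acting freely, properly discontinuously and isometrically) match the paper's argument. The problem is your third step. You restrict to the identity component $H=\Isom(\widetilde M)^0$ and then try to adjoin $\Gamma$ by forming $G=H\cdot\Gamma$, which leaves you needing to show that the image of $\Gamma$ in $\pi_0(\Isom(\widetilde M))$ is finite. The mechanism you propose for this --- constraints coming from an outer action of the component group on the compact stabilizer $K_H$ --- is not an argument; a compact group can have infinite outer automorphism groups acting through it in complicated ways, and nothing you have written explains why discreteness or properness of $\Gamma$ would bound its image in $\pi_0$. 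As it stands this is a genuine gap.

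The gap is easily closed, and the paper's route shows how: do not pass to the identity component at all. Take $G=\Isom(\widetilde M)$ and $K=\mathrm{Stab}_G(\tilde p)$. Transitivity gives the fibration $K\to G\to G/K\cong\widetilde M$, and since $\widetilde M$ is simply connected the tail of its long exact sequence reads
\[
1=\pi_1(\widetilde M)\to \pi_0(K)\to \pi_0(G)\to \pi_0(\widetilde M)=\ast,
\]
so $\pi_0(G)\cong\pi_0(K)$, which is finite because $K$ is a compact Lie group. Thus the full isometry group already has finitely many components, $\Gamma$ sits inside it with no further work, and the question you identified as ``the main obstacle'' never arises. (In particular, if you insist on your $G=H\cdot\Gamma$, its component group embeds in the finite $\pi_0(\Isom(\widetilde M))$, so that construction is also rescued by the same observation --- but it buys you nothing over just using the whole isometry group.)
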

\begin{proof}
	Let $M$ is a locally homogeneous Riemannian manifold.
	By Singer\cite{singer1960infinitesimally}, its universal cover $\widetilde{M}$, equipped with the pullback metric, is a homogeneous Riemannian manifold.
	Let $G$ be the isometry group of $\widetilde{M}$, and $K$ the isotropy subgroup of any arbitrary point in $\widetilde{M}$. Since $K$ is the image of $\mathfrak{o(\frak{g})}$ under the exponential map, it is compact.

	We claim that $\widetilde{M}$ is diffeomorphic to $G/K$.
	To see this, note that by assumption, $G$ acts transitively on $\widetilde{M}$.
	If $G$ acts on $\widetilde{M}$ on the left (resp. right), then, as Riemannian manifolds, $\widetilde{M} = G/K$ (resp. $\widetilde{M} = K \backslash G$), where the latter is equipped with a Riemannian metric descended from a left-invariant (resp. right-invariant) Riemannian metric on $G$.
	In particular, as smooth manifolds, $\widetilde{M} = G/K$, as desired.

	We claim that $G$ has finitely many connected components.
	To see this, note that since  $\widetilde{M}$ is simply-connected, it follows from the long exact sequence of fibration of $K \to G \to \widetilde{M}$ that $\pi_0(G) \cong \pi_0(K)$.
	Since $K$ is compact, thus $\pi_0(K)$ is finite, and so is $\pi_0(G)$, as desired.

	With above in mind, let $\Gamma = \pi_1(M)$.
	Since it is the deck transformation group of $\widetilde{M} \to M$, which is a local isometry, thus $\pi_1(M)$ is a discrete subgroup of $G$ that acts freely on $\widetilde{M} = G/K$.
	This completes the proof.
\end{proof}

Now, recall that Theorem~\ref{lhm} is a statement that holds up to finite cover.
Thus, with the following (a corollary of Lemma~\ref{lhm_as_double-coset-manifold}), we reduce our consideration to the case when $M$ is a double coset manifold $\Gamma \backslash G / K$ where $G$ is a connected Lie group.
\begin{lem}~\label{finite-cover-double-coset-manifold}
	Let $M$ be a closed locally homogeneous Riemannian manifold.
	Then $M$ has a finite-sheeted cover given by a double coset manifold $\Gamma \backslash G / K$ where
	\begin{itemize}
		\item $G$ is a connected Lie group
		\item $K$ is a compact subgroup of $G$
		\item $\Gamma$ is a cocompact lattice in $G$
	\end{itemize}
\end{lem}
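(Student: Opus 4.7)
The plan is to apply Lemma~\ref{lhm_as_double-coset-manifold} to write $M = \Gamma \backslash G / K$ with $G$ a Lie group having only finitely many connected components, $K$ compact, and $\Gamma$ discrete acting freely on $G/K$. From there I would pass to a finite cover engineered to replace $G$ by its identity component $G^0$. The key observation is that $\widetilde{M} = G/K$ is connected, which forces $G = G^0 K$: the subset $G^0 K$ is a union of components of $G$, and its image in $G/K$ is both open and closed, so it must equal $G/K$, whence $G = G^0 K$. Setting $K_0 := G^0 \cap K$, a compact subgroup of $G^0$, the inclusion $G^0 \hookrightarrow G$ then induces a diffeomorphism $G^0/K_0 \xrightarrow{\cong} G/K$.

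Next, set $\Gamma' := \Gamma \cap G^0$. Since $G/G^0$ is finite, $[\Gamma : \Gamma']$ divides $|G/G^0|$ and is in particular finite, so $\Gamma' \backslash G/K$ is a finite-sheeted cover of $M$. Under the identification $G/K \cong G^0/K_0$ and because $\Gamma' \subset G^0$, the $\Gamma'$-action becomes ordinary left translation on the double coset space $\Gamma' \backslash G^0/K_0$; it remains free as a restriction of the free $\Gamma$-action on $G/K$. This exhibits a finite cover of $M$ diffeomorphic to $\Gamma' \backslash G^0/K_0$, with $G^0$ connected and $K_0$ compact.

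It remains to check that $\Gamma'$ is a cocompact lattice in $G^0$. Discreteness is immediate from the discreteness of $\Gamma$. For cocompactness, note that the right $K_0$-action on $\Gamma' \backslash G^0$ is free (this is equivalent, via the identity $\gamma = gkg^{-1}$, to $\Gamma'$ acting freely on $G^0/K_0$), so the projection $\Gamma' \backslash G^0 \to \Gamma' \backslash G^0/K_0$ is a principal $K_0$-bundle; with compact fiber $K_0$ and compact base (a finite cover of the closed manifold $M$), the total space $\Gamma' \backslash G^0$ is compact.

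The argument contains no real obstacle. The two mildly delicate points are establishing $G = G^0 K$, which is forced by the connectedness of $\widetilde{M}$, and verifying that the restricted $\Gamma'$-action on $G^0/K_0$ coincides with the one inherited from $G/K$; both are formalities once the diffeomorphism $G^0/K_0 \cong G/K$ is in hand.
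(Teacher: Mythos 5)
Your proposal is correct and follows essentially the same route as the paper: pass to the identity component $G^0$, intersect $K$ and $\Gamma$ with it, and deduce cocompactness of $\Gamma\cap G^0$ from compactness of the resulting finite cover. If anything, you are more careful than the paper at the one delicate point, namely justifying the identification $G^0/(G^0\cap K)\cong G/K$ via $G=G^0K$ (forced by connectedness of $G/K$), which the paper's proof asserts without comment.
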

\begin{proof}
	Let $M$ be a closed locally homogeneous Riemannian manifold.
	By Lemma~\ref{lhm_as_double-coset-manifold}, there exists
	\begin{itemize}
		\item $G'$ a Lie group with finitely many connected-components
		\item $K'$ a compact subgroup of $G$
		\item $\Gamma'$ a discrete subgroup of $G'$ that acts freely on $G'/K'$
	\end{itemize}
	where
	\begin{enumerate}
		\item The universal cover $\widetilde{M}$ of $M$ is diffeomorphic to $G'/K'$, and
		\item $M$ is diffeomorphic to $\Gamma' \backslash G'/K'$.
	\end{enumerate}

	With above in mind, let
	\begin{itemize}
		\item $G$ be the identity connected component of $G'$.
		\item $K = K' \cap G$ ($K$ is a compact subgroup in $G$).
		\item $\Gamma = \Gamma' \cap G$.
	\end{itemize}
	Since $G$ is a finite-index subgroup of $I$, thus $\Gamma$ is a finite-index subgroup of $\Gamma'$.
	Thus, $\Gamma \backslash G / K = \Gamma \backslash G' / K'$ is a finite-sheeted cover of $M$.

	It remains to justify that $\Gamma$ is a cocompact lattice in $G$, equivalently, $\Gamma \backslash G$ is compact.
	Since $K$ is compact, this is equivalent to $\Gamma \backslash G / K$ being compact.
	This holds, since $\Gamma \backslash G / K$ is a finite-sheeted cover of the compact $M$.
	This completes the proof.
\end{proof}

From now on, we assume that $M = \Gamma \backslash G / K$, where $G$ is a connected Lie group, and
\begin{itemize}
	\item $K$ is a compact subgroup of $G$
	\item $\Gamma$ is a cocompact lattice of $G$
\end{itemize}
With this in mind, in order to prove Theorem~\ref{lhm}, the following Lemma about Lie groups will be crucial.
\begin{lem}\label{connected-lie-group-ses}
	Let $G$ be a connected Lie group. Then there is a short exact sequence of connected Lie groups
	\[
		1 \to H \to G \xra{p} \overline{G} \to 1
	\]
	where
	\begin{enumerate}
		%\item $H$ is a closed normal subgroup of $G$ that is
		\item $H$ is lattice hereditary (i.e. every lattice $\Gamma$ in $G$ yields a lattice $\Gamma \cap H$ as a lattice in $H$)
		\item $H$ is solvable-by-compact-by-abelian
		\item $\overline{G}$ is a semisimple Lie group without compact factors and without center.
	\end{enumerate}
\end{lem}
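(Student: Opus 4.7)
The plan is to build the sequence as a tower of three successive quotients of $G$ that remove, in turn, the solvable radical, the compact semisimple factors, and the center. Let $R \triangleleft G$ be the solvable radical and $\pi_1 \colon G \thra G/R$ the quotient (so $G/R$ is a connected semisimple Lie group). Let $G_c \triangleleft G/R$ be the connected normal subgroup generated by the compact simple ideals, and let $\pi_2 \colon G/R \thra S := (G/R)/G_c$, so that $S$ is connected semisimple without compact factors. Finally, let $Z(S)$ be the (discrete) center of $S$, and set $\pi_3 \colon S \thra \bar{G} := \Ad(S) = S/Z(S)$. Define $p := \pi_3 \circ \pi_2 \circ \pi_1$ and $H := \ker(p)$.

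Properties (2) and (3) are essentially definitional. By construction $\bar{G}$ is connected, adjoint, semisimple, and has no compact factors, giving (3). For (2), the kernels of $\pi_1$, $\pi_2$, $\pi_3$ are respectively $R$ (solvable), $G_c$ (compact), and $Z(S)$ (discrete abelian); applying Remark~\ref{group-by-extension-remark} to $H = \ker(\pi_3 \circ \pi_2 \circ \pi_1)$ shows $H$ is solvable-by-compact-by-abelian.

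The heart of the proof is (1). Let $\Gamma$ be a lattice in $G$. I would show that $p(\Gamma)$ is a lattice in $\bar{G}$, from which $\Gamma \cap H$ being a lattice in $H$ follows by the standard Haar-measure disintegration argument for a surjective Lie group homomorphism whose restriction to $\Gamma$ has lattice image. To verify $p(\Gamma)$ is a lattice, I would proceed stage by stage. First, Mostow's theorem on lattices in Lie groups with nontrivial radical (see Raghunathan, \emph{Discrete Subgroups of Lie Groups}, Ch.~III) gives that $\Gamma R$ is closed in $G$ and $\pi_1(\Gamma)$ is a lattice in $G/R$. Second, quotienting by the compact normal subgroup $G_c$ automatically preserves the lattice property, so $\pi_2(\pi_1(\Gamma))$ is a lattice in $S$. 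Third, and most delicately, $\pi_3$ quotients by the discrete (possibly infinite) center $Z(S)$; here the cocompactness of $\Gamma$ (guaranteed by Lemma~\ref{finite-cover-double-coset-manifold}) ensures $\pi_2(\pi_1(\Gamma)) \cdot Z(S)$ has compact quotient in $S$, hence is closed with trivial identity component, hence discrete, so $p(\Gamma)$ is a lattice in $\bar{G}$.

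The main obstacle is the first stage: Mostow's theorem is classical but substantial, and for non-uniform lattices can fail without extra hypotheses; we are saved here by cocompactness of $\Gamma$. A secondary subtlety is connectedness of $H$: because $Z(S)$ can be nontrivial (as occurs when $S$ involves $\widetilde{SL_2(\R)}$), $H$ may fail to be connected. One handles this either by permitting disconnected $H$ in the statement, or by replacing $H$ with its identity component $H^0 = \pi_1^{-1}(G_c)$ (which enlarges $\bar{G}$ by a discrete central extension but leaves the essential content of (1) and (2) intact for the subsequent structural applications).
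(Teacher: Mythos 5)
Your tower of quotients (solvable radical, then compact factors, then center) is exactly the decomposition the paper uses, and your treatment of properties (2) and (3) matches. The problem is the first stage of your argument for property (1). The claim that $\pi_1(\Gamma)$ is a lattice in $G/R$ --- equivalently, by Proposition~\ref{lattice_tfae}, that the radical $R$ is $\Gamma$-hereditary --- is false in general, and cocompactness of $\Gamma$ does \emph{not} save it. The paper's own cautionary remark after the lemma gives the counterexample: $G = \R \times SO(3)$ with $\Gamma = \{(n, A^n) : n \in \Z\}$ for $A \in SO(3)$ of infinite order. Here $\Gamma$ is a cocompact lattice, $R = \R$, but $\pi_1(\Gamma) = \{A^n\}$ is a non-closed, non-discrete subgroup of $SO(3)$, so it is not a lattice in $G/R$, and $\Gamma R$ is not closed. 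The version of the Mostow--Raghunathan theorem you are invoking requires the hypothesis that $G/R$ has no compact factors, which is precisely what fails here; without it, your second stage receives as input a group that need not be a lattice, and the chain breaks.

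The repair, which is the route the paper takes, is to not separate the first two stages: instead of quotienting by $R$ and then by $G_c$, quotient in one step by $\ker(\pi_2 \circ \pi_1)$, the preimage in $G$ of the compact radical of $G/R$. This larger subgroup \emph{is} lattice-hereditary for every lattice in $G$ by the Bieberbach--Auslander--Wang theorem (\cite[Proposition~2.5]{baues-kamishima:2018}), after which Proposition~\ref{lattice_tfae} hands you a lattice in $G_2$, and the center is then handled by \cite[Proposition~5.18]{ragunathan:1972} much as in your third stage (no cocompactness needed there either; the paper's proof works for an arbitrary lattice, as the statement of the lemma requires, whereas your argument leans on cocompactness imported from the application). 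Your closing observation that $H$ need not be connected when $Z(S)$ is infinite (e.g.\ $G = \widetilde{SL_2(\R)}$) is a fair criticism of the statement as written --- the paper does not address it --- but it is orthogonal to the gap above.
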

\begin{proof}
	We will define a sequence of surjective Lie group homomorphisms.
	\[
		p: G =: G_0
		\stackrel{p_1}{\thra} G_1
		\stackrel{p_2}{\thra} G_2
		\stackrel{p_3}{\thra} G_3 =: \overline{G}
	\]

	Let $G^{sol}$ be a solvradical (maximal connected solvable normal Lie subgroup) of $G$.
	By \cite{ragunathan:1972}, $G^{sol}$ exists, and is unique up to conjugation.
	Fix a choice of $G^{sol}$.
	A classical result (Levi decomposition) states that $G_1 := G/G^{sol}$ is semisimple.
	Let $p_1: G \to G_1$ be the canonical projection map,
	so that $\ker(p_1) = G^{sol}$ is solvable.

	Let $G_1^K$ be a compact radical (maximal connected compact normal Lie subgroup) of $G_1$.
	Then, $G_2 := G_1/G_1^K$ is a semisimple Lie group without compact factor.
	Let $p_2: G_1 \to G_2$ be the canonical projection map,
	so that $\ker(p_2) = G_1^K$ is compact.

	Let $G_2^Z$ be the center of $G_2$.
	By \cite[\S~3, Proposition~6.30]{knapp:1996}, $G_3 := G_2/G_2^Z$ is a semisimple Lie group without center and without compact factors.
	Let $p_3: G_2 \to G_3$ be the canonical projection map,
	so that $\ker(p_3) = G_2^Z$ is abelian.

	Now, let $p := p_3 \circ p_2 \circ p_1$, and $H := \ker(p)$.
	Then, by construction (c.f. Remark~\ref{group-by-extension-remark}), $\ker(p)$ is solvable-by-compact-by-abelian.
	Thus, combining the above, this proves 1 and 3.

	It remains to prove 2, i.e. $H = \ker(p)$ is lattice hereditary.
	To do so, we recall the following the following technical proposition:
	\begin{prop}[{\cite[Theorem~2.6]{geng:2015}}, {\cite{onishchik-vinberg:2000}}, {\cite[Theorem~1.13 (statement and proof)]{ragunathan:1972}}]\label{lattice_tfae}
		Let $G$ be a Lie group.
		Let $H < G$ be a closed subgroup, and let $\Gamma < G$ be lattice.
		If either $\Gamma$ is cocompact or $H$ is normal, then the following are equivalent:
		\begin{enumerate}
			%\item $H\Gamma$ is a closed subset of $G$.
			\item $\Gamma \cap H$ is a lattice in $H$ (in this case, we say that $H$ is \emph{$\Gamma$-hereditary}).
			\item $\Gamma \cap H$ is a cocompact lattice in $H$ (when $\Gamma$ is cocompact).
			%\item The image of $\Gamma$ in $G/H$ is discrete.
			\item The image of $\Gamma$ in $G/H$ is a lattice (when $H$ is normal).
			\item The image of $\Gamma$ in $G/H$ is a cocompact lattice (when $H$ is normal and $\Gamma$ is cocompact).
		\end{enumerate}
	\end{prop}
	Note that it will be used repeatedly, even in the main proof of Theorem~\ref{lhm}.

	To see this, let $\Lambda$ be any lattice in $G$.
	By the Bieberbach-Auslander-Wang Theorem{\cite[Proposition~2.5]{baues-kamishima:2018}}, $G^N := \ker(p_2 \circ p_1)$ is lattice hereditary in $G$.
	Thus, $\Lambda \cap G^N$ is a lattice in $G^N$.
	By Proposition~\ref{lattice_tfae}, $\Lambda_2 := p_2(p_1(\Lambda)) = \Lambda/(\Lambda \cap G^N)$ is a lattice in $N_2$.
	By {\cite[Proposition~5.18]{ragunathan:1972}}, $\ker(p_3) = G_2^Z$ is lattice hereditary in $G_2$.
	Thus, $\Lambda_2 \cap G_2^Z$ is a lattice in $G_2^Z$.
	By Proposition~\ref{lattice_tfae}, $p(\Lambda) = p_3(\Lambda_2)$ is a lattice in $G_3$.
	By Proposition~\ref{lattice_tfae} again, $\Lambda$ is a lattice in $H$, as desired.

	This completes the proof.
\end{proof}

\begin{rem}
	It might be tempting to use the Levi decomposition to justify Lemma~\ref{connected-lie-group-ses} directly.
	Unfortunately, a solvradical need not be lattice hereditary, even when $\Gamma$ is cocompact.
	To see this, consider $G = \R \times SO(3)$, so that $G^{sol} = \R$.
	Let $A$ an infinite order element in $SO(3)$ and $\Gamma := \rho(\Z)$ where
	\begin{align*}
		\rho: \Z &\to G \\
		n &\mapsto (n, A^n).
	\end{align*}
	Then $\Gamma$ is a cocompact lattice in $G$ where $\Gamma \cap S = \{0\}$, which is clearly not cocompact in $S$.
\end{rem}

We are now ready to use Lemma~\ref{connected-lie-group-ses} to prove Theorem~\ref{lhm}.

First, we would like to specify the base of the fiber bundle in Theorem~\ref{lhm}, which is supposed to be a closed locally symmetric space of non-compact type, as a double coset manifold given by the centerless semisimple $\overline{G}$.
With $\overline{K}^{\max}$ being any fixed maximal compact subgroup of $\overline{G}$ that contains $p(K)$, we have a symmetric space of non-compact type $\overline{G}/\overline{K}^{\max}$.
We would like to specify a cocompact lattice $\overline{\Gamma}'$ in $\overline{G}$ that acts freely on $\overline{G}/\overline{K}^{\max}$, which would yield a closed locally symmetric space of non-compact type $\overline{\Gamma}' \backslash \overline{G}/\overline{K}^{\max}$.
A natural candidate for $\overline{\Gamma}'$ is $\overline{\Gamma} := p(\Gamma) = \Gamma/(\Gamma \cap H)$.
However, a priori, there are two potential problems:
\begin{enumerate}
	\item $\overline{\Gamma}$ might not be a cocompact lattice (or even a discrete subgroup) in $\overline{G}$.
	\item Even if we assume that $\overline{\Gamma}$ is discrete,
	we can only guarantee that $\overline{\Gamma}$ acts on $\overline{G}/\overline{K}^{\max}$ with finite point-stabilizer (since the stabilizer of the point $\overline{g}\overline{K}^{\max} \in \overline{G}/\overline{K}^{\max}$ is $\overline{\Gamma}_{\overline{g}\overline{K}^{\max}} = \Gamma \cap \overline{g}\overline{K}^{\max}\overline{g}^{-1}$, which is discrete and compact, hence finite).
	In particular, $\overline{\Gamma}$ might not act freely on $\overline{G}/\overline{K}^{\max}$.
\end{enumerate}
Fortunately, it turns out that $\overline{\Gamma}$ is a virtually torsion-free, cocompact lattice in $\overline{G}$,
so that it admits a finite-index subgroup that satisfies the required conditions for $\overline{\Gamma}'$.
\begin{claim}\label{proof_base}
	Let $\overline{\Gamma}$ and $\overline{G}$ be as above.
	Then $\overline{\Gamma}$ admits a finite-index subgroup $\overline{\Gamma}'$ such that the double coset manifold $B = \overline{\Gamma}' \backslash \overline{G} / \overline{K}^{\max}$ is a closed locally symmetric space of non-compact type.
\end{claim}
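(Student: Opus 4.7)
The plan is to verify, in order, (a) that $\overline{\Gamma}$ is a cocompact lattice in $\overline{G}$, (b) that $\overline{\Gamma}$ is virtually torsion-free, and (c) that any torsion-free finite-index subgroup $\overline{\Gamma}'$ automatically acts freely on $\overline{G}/\overline{K}^{\max}$. Given these three facts, $B = \overline{\Gamma}' \backslash \overline{G}/\overline{K}^{\max}$ is by definition a closed locally symmetric space of non-compact type.

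First I would establish (a) by invoking the lattice heredity supplied by Lemma~\ref{connected-lie-group-ses}: since $\Gamma$ is a cocompact lattice in $G$ and $H$ is lattice hereditary, $\Gamma \cap H$ is a lattice in $H$. Now $H \lhd G$ is normal and $\Gamma$ is cocompact, so the equivalences in Proposition~\ref{lattice_tfae} apply and yield that the image $\overline{\Gamma} = p(\Gamma)$ is a cocompact lattice in $\overline{G} = G/H$. (In particular, $\overline{\Gamma}$ is automatically discrete, which dispenses with the first potential problem listed before the claim.)

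Next I would handle (b) via Selberg's lemma. Because $\overline{G}$ is semisimple and centerless, the adjoint representation $\Ad\colon \overline{G} \hookrightarrow \GL(\overline{\mathfrak{g}})$ is a faithful linear embedding, so $\overline{\Gamma}$ is realized as a subgroup of $\GL(n,\R)$ for $n = \dim \overline{G}$. Being a cocompact lattice in a connected Lie group, $\overline{\Gamma}$ is finitely generated, so Selberg's lemma provides a torsion-free finite-index subgroup $\overline{\Gamma}' \leq \overline{\Gamma}$. This $\overline{\Gamma}'$ is still a cocompact lattice in $\overline{G}$.

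For (c), the stabilizer in $\overline{\Gamma}'$ of a point $\overline{g}\overline{K}^{\max} \in \overline{G}/\overline{K}^{\max}$ is $\overline{\Gamma}' \cap \overline{g}\overline{K}^{\max}\overline{g}^{-1}$, which is the intersection of a discrete subgroup with a compact subgroup and is therefore finite. Since $\overline{\Gamma}'$ is torsion-free, every such finite subgroup is trivial, so the action is free. Combining these steps, $B = \overline{\Gamma}' \backslash \overline{G}/\overline{K}^{\max}$ is a smooth manifold, closed because $\overline{\Gamma}' \backslash \overline{G}$ is compact and $\overline{K}^{\max}$ is compact, and it meets the defining conditions of a locally symmetric space of non-compact type. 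The main obstacle to watch is really step (a): one must not directly try to push $\Gamma$ through $p$ without first checking that $\Gamma \cap H$ is a lattice in $H$, which is exactly the content secured by the lattice-heredity conclusion of Lemma~\ref{connected-lie-group-ses}; the rest is a standard application of Selberg's lemma together with the compact/discrete stabilizer argument.
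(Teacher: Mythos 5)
Your proposal is correct and follows essentially the same route as the paper: lattice heredity of $H$ plus Proposition~\ref{lattice_tfae} to get that $\overline{\Gamma}$ is a cocompact lattice, then linearity of the centerless semisimple $\overline{G}$ via the adjoint representation plus Selberg's lemma to get virtual torsion-freeness. The only differences are cosmetic --- you justify finite generation of $\overline{\Gamma}$ directly (cocompact lattice in a connected Lie group) rather than as a quotient of the finitely generated $\Gamma$, and you spell out the discrete-meets-compact stabilizer argument for freeness of the action, which the paper records in the discussion preceding the claim rather than in the proof itself.
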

\begin{proof}
	Since $\overline{G}/\overline{K}^{\max}$ is a symmetric space of non-compact type, to prove the claim, it suffices to show that $\overline{\Gamma}$ admits a finite-index subgroup $\overline{\Gamma}'$ is a torsion-free, cocompact lattice in $\overline{G} / \overline{K}^{\max}$, so that $\overline{\Gamma}' \backslash \overline{G} / \overline{K}^{\max}$ is a closed locally symmetric space of non-compact type.
	Since any finite-index subgroup of a cocompact lattice in $\overline{G}$ is still a finite-index subgroup of a cocompact lattice in $\overline{G}$, it suffices to prove that $\overline{\Gamma}$ is a virtually torsion-free, cocompact lattice in $\overline{G}$.
	To complete the proof, we prove the following facts about $\overline{\Gamma}$ accordingly:
	\begin{enumerate}
		\item $\overline{\Gamma}$ is virtually torsion-free:
			Since $\Gamma$ is the fundamental group of a compact manifold, it is finitely-generated.
			Since $\overline{\Gamma}$ is a quotient of $\Gamma$, it is also finitely-generated.
			By Lemma~\ref{connected-lie-group-ses}, $\overline{G}$ is a centerless semisimple Lie group, thus its adjoint map is injective, and thus it is linear.
			Thus, altogether, $\overline{\Gamma}$ is a finitely-generated linear group over field of characteristic-$0$, thus by a classical result of Selberg, it is virtually torsion-free.
		\item $\overline{\Gamma}$ is a cocompact lattice in $\overline{G}$:
			By Lemma~\ref{connected-lie-group-ses}, $H$ is lattice hereditary, thus $\Gamma \cap H$ is a lattice in $H$.
			By Proposition~\ref{lattice_tfae}, $\Gamma \cap H$ is a cocompact lattice in $H$.
			By Proposition~\ref{lattice_tfae} again, $\overline{\Gamma}$ is a cocompact lattice in $\overline{G}$.
	\end{enumerate}
	This completes the proof.
\end{proof}
To proceed, we fix a choice of $\overline{\Gamma}'$ as in Claim~\ref{proof_base}.
The resulting double coset manifold $B := \overline{\Gamma}' \backslash \overline{G} / \overline{K}^{\max}$ will be the base of the fiber bundle in Theorem~\ref{lhm}.

Next, we specify a finite-sheeted cover of $M$ that covers $B$, that will serve as the total space in Theorem~\ref{lhm}.
\begin{claim}
	Let $\Gamma' := p^{-1}(\overline{\Gamma}') \cap \Gamma$.
	The double coset manifold $M' := \Gamma' \backslash G / K$ is a finite cover of $M$.
\end{claim}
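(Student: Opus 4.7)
The plan is to verify that $\Gamma'$ is a finite-index subgroup of $\Gamma$ which still acts freely on $G/K$; the covering claim then follows directly from elementary covering space theory for discrete quotient manifolds.

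First I would record the trivial inclusion $\Gamma' \subseteq \Gamma$, which is immediate from the very definition $\Gamma' := p^{-1}(\overline{\Gamma}') \cap \Gamma$. Since $\Gamma$ acts freely on $G/K$ by hypothesis (it was chosen that way when $M$ was realized as a double coset manifold via Lemma~\ref{lhm_as_double-coset-manifold} and Lemma~\ref{finite-cover-double-coset-manifold}), the subgroup $\Gamma'$ also acts freely on $G/K$. Hence the double coset space $M' := \Gamma' \backslash G / K$ is genuinely a smooth manifold, and the natural projection $M' \to M$ is a well-defined covering map.

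Next I would argue that $[\Gamma : \Gamma'] < \infty$. The key observation is that, by the definition of $\overline{\Gamma} := p(\Gamma)$, the restriction $p|_{\Gamma} : \Gamma \twoheadrightarrow \overline{\Gamma}$ is surjective. Rewriting
\[
\Gamma' = p^{-1}(\overline{\Gamma}') \cap \Gamma = (p|_{\Gamma})^{-1}(\overline{\Gamma}'),
\]
we see that $\Gamma'$ is precisely the preimage under this surjection of the finite-index subgroup $\overline{\Gamma}' \leq \overline{\Gamma}$ supplied by Claim~\ref{proof_base}. Preimages of finite-index subgroups under group surjections have the same index, so
\[
[\Gamma : \Gamma'] = [\overline{\Gamma} : \overline{\Gamma}'] < \infty.
\]

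Finally, the covering $M' \to M$ has degree $[\Gamma : \Gamma']$, which is finite by the previous step, completing the proof. I do not anticipate any real obstacle here: the statement is essentially a bookkeeping consequence of the surjectivity of $p|_{\Gamma}$ and the finiteness of $[\overline{\Gamma} : \overline{\Gamma}']$ already established in Claim~\ref{proof_base}. The only minor subtlety worth flagging explicitly is that freeness of the $\Gamma'$-action is inherited from $\Gamma$, so no additional verification is required beyond the index computation.
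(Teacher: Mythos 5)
Your proposal is correct and follows the same route as the paper's (much terser) proof: the paper simply asserts that $\Gamma'$ has finite index in $\Gamma$ by construction, and you have filled in the standard justification via the identity $[\Gamma:\Gamma'] = [\overline{\Gamma}:\overline{\Gamma}']$ for preimages under the surjection $p|_\Gamma$. The additional remark about freeness of the $\Gamma'$-action being inherited is a reasonable (if implicit in the paper) piece of bookkeeping.
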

\begin{proof}
	By construction, $\Gamma'$ is a finite-index subgroup of $\Gamma$,
	thus $\Gamma' \backslash G / K$ is a finite cover of $\Gamma \backslash G / K = M$.
\end{proof}

We are now ready to specify the fiber bundle in the statement of Theorem~\ref{lhm}.
The choice of the projection map is the most obvious one (i.e. the canonical coset map).
The key point here is that it is indeed a fiber bundle.
\begin{claim}\label{fiber-bundle}
	Recall that $M' = \Gamma' \backslash G/K$ and $B = \overline{\Gamma}' \backslash \overline{G} / \overline{K}^{\max}$.
	Let $f: M' \to B$ be the (surjective) canonical coset (set-theoretic) map.
	Then $f$ is a fiber bundle with compact fiber.
\end{claim}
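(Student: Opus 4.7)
The strategy is to show that $f$ is a proper, surjective submersion of smooth manifolds and then invoke Ehresmann's fibration theorem. First, I would package $f$ into the commutative diagram
\[
\begin{tikzcd}
G/K \arrow[r, "\pi"] \arrow[d, "q_{M'}"'] & \overline{G}/\overline{K}^{\max} \arrow[d, "q_B"] \\
M' \arrow[r, "f"] & B
\end{tikzcd}
\]
in which $\pi(gK) := p(g)\overline{K}^{\max}$ is well-defined because $p(K) \subseteq \overline{K}^{\max}$, and the vertical arrows are the covering projections associated to the free, properly discontinuous $\Gamma'$- and $\overline{\Gamma}'$-actions. Setting $L := p^{-1}(\overline{K}^{\max})$, we have $K \subseteq L$ closed in $G$, and a canonical identification $G/L \cong \overline{G}/\overline{K}^{\max}$. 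Thus $\pi$ factors as $G/K \to G/L \cong \overline{G}/\overline{K}^{\max}$, whose first factor is the standard homogeneous-space fiber bundle with fiber $L/K$.

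Next I would verify that $f$ is a surjective submersion. The derivative $dp\colon \mathfrak{g}\to \overline{\mathfrak{g}}$ is surjective (since $p$ is) and sends $\mathfrak{k}$ into $\overline{\mathfrak{k}}^{\max}$, so the induced map $\mathfrak{g}/\mathfrak{k} \to \overline{\mathfrak{g}}/\overline{\mathfrak{k}}^{\max}$ is surjective; by $G$-equivariance, $\pi$ is a submersion at every point. Since $q_{M'}$ and $q_B$ are local diffeomorphisms, commutativity of the square then yields that $f$ is a submersion. Surjectivity of $f$ is inherited from the surjectivity of $p$, and properness is automatic because $M'$ is compact (a finite cover of the closed manifold $M$).

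At this point, Ehresmann's theorem applies: a proper, surjective submersion between smooth manifolds is a locally trivial smooth fiber bundle, so $f$ is a fiber bundle. The compactness of the fiber is then automatic, since a typical fiber is a closed subspace (preimage of a point in the Hausdorff space $B$) of the compact space $M'$. If one wants an explicit description, an unwinding of the diagram at the basepoint $[e\overline{K}^{\max}] \in B$ yields the fiber $(\Gamma' \cap L)\backslash L/K$, and using that $\overline{\Gamma}'$ is torsion-free one checks $\Gamma'\cap L = \Gamma \cap H$, which is a cocompact lattice in $H$ by Lemma~\ref{connected-lie-group-ses} and Proposition~\ref{lattice_tfae}; however this identification is not needed for the statement. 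The only delicate point in the plan is the submersion check, which reduces cleanly to the surjectivity of $dp$ and to the fact that $p(K)\subseteq \overline{K}^{\max}$, both of which are built into the construction in Lemma~\ref{connected-lie-group-ses} and the choice of $\overline{K}^{\max}$, so no real obstacle is anticipated.
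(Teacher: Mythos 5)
Your proposal is correct and follows essentially the same route as the paper: both reduce to Ehresmann's fibration theorem by exhibiting $f$ as a proper surjective submersion, first verifying the submersion property for $G/K \to \overline{G}/\overline{K}^{\max}$ via equivariance (you compute with $dp$ directly; the paper cites the equivariant rank theorem) and then descending through the covering projections. The compactness of the fiber and the identification of it with $(\Gamma'\cap p^{-1}(\overline{K}^{\max}))\backslash p^{-1}(\overline{K}^{\max})/K$ also match the paper's treatment.
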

\begin{proof}
	By the Ehresmann fibration theorem, it suffices to prove that $f$ is a proper submersion.
	Then, since the total space if compact, it follows immediately that both the fiber and the base are compact.
	To complete the proof, we prove the following accordingly:
	\begin{enumerate}
		\item $f$ is a surjective submersion:
		This follows from the following, in logical order:
		\begin{enumerate}
			\item $p: G \to \overline{G}$ is a surjective submersion: this holds by the fact that $H$ is a closed subgroup of $G$ and Cartan's closed subgroup theorem
			\item $\overline{p}: G/K \to \overline{G}/\overline{K}^{\max}$ (with $\overline{p}$ being the canonical coset map) is a surjective submersion: this follows from the following facts:
				\begin{enumerate}
					\item $\overline{p}$ is smooth (as it descends from the smooth $p$) with respect to proper (right) actions by $K$ (resp. $\overline{K}^{\max}$)
					\item $\overline{p}$ is equivariant with respective to the transitive left $G$-action (c.f. \cite[Theorem~7.25 (equivariant rank theorem)]{lee:2012})
				\end{enumerate}
			\item $f: \Gamma' \backslash G / K \to \overline{\Gamma}' \backslash \overline{G} / \overline{K}^{\max}$ is a surjective submersion: this follows from the following facts:
				\begin{enumerate}
					\item $\overline{p}$ is $p$-equivariant (i.e. $l_{p(g)} \circ \overline{p} = \overline{p} \circ l_g$ for all $g \in G$)
					\item $\Gamma'$ (resp. $\overline{\Gamma}' = p(\Gamma')$) acts freely and properly discontinuously on $G/K$ (resp. $\overline{G} / \overline{K}^{\max}$)
				\end{enumerate}
		\end{enumerate}
		\item $\overline{\Gamma}' \backslash \overline{G} / \overline{K}^{\max}$ is compact: this follows from the fact that $\Gamma' \backslash G / K$ is compact and $f$ is surjective.
	\end{enumerate}
	This completes the proof.
\end{proof}

Finally, we show that the fiber of $f$ is connected, and is presented as a double coset manifold as in the statement of Theorem~\ref{lhm}.
\begin{claim}
 Let $F$ be the fiber of $f$.
 Then $F$ is diffeomorphic to $(\Gamma' \cap P) \backslash P / K$ where $P$ is a connected, (solvable-by-compact-by-abelian)-by-compact subgroup of $G$.
 (The connectedness of $P$ immediately implies the connectedness of $F$.)
\end{claim}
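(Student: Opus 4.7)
The plan is to identify the fiber $F$ of $f$ directly by unwinding the coset descriptions of $M'$ and $B$.

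Set $P := p^{-1}(\overline{K}^{\max}) \subseteq G$, which is closed (hence, by Cartan's closed subgroup theorem, a Lie subgroup) and contains both $K$ (since $p(K) \subseteq \overline{K}^{\max}$) and $H = \ker p$. A coset $\Gamma' g K \in M'$ lies in the fiber over $\overline{\Gamma}' \overline{e}\,\overline{K}^{\max} \in B$ iff $p(g) \in \overline{\Gamma}' \overline{K}^{\max}$. Writing $p(g) = \overline{\gamma}\,\overline{k}$ with $\overline{\gamma} \in \overline{\Gamma}' = p(\Gamma')$ and lifting $\overline{\gamma}$ to $\gamma \in \Gamma'$ yields $\gamma^{-1} g \in P$, so the preimage set is exactly $\Gamma' P$ and $F = \Gamma' \backslash \Gamma' P / K$. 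Because $K \subseteq P$, a short computation identifies this with $(\Gamma' \cap P) \backslash P / K$: if $q_1, q_2 \in P$ satisfy $q_2 = \gamma q_1 k$ for some $\gamma \in \Gamma'$ and $k \in K$, then $\gamma = q_2 k^{-1} q_1^{-1} \in P \cap \Gamma'$.

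Next I would read off the algebraic structure of $P$. Restriction of $p$ yields the exact sequence
\[
1 \to H \to P \to \overline{K}^{\max} \to 1,
\]
and by Lemma~\ref{connected-lie-group-ses}, $H$ is solvable-by-compact-by-abelian while $\overline{K}^{\max}$ is compact, so $P$ is (solvable-by-compact-by-abelian)-by-compact. Any closed subgroup of $P$---for instance the identity component $P^{\circ}$, which is relevant below---inherits this structure by intersecting with each term of the defining subnormal series.

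The main obstacle will be ensuring $P$ is connected (which then gives connectedness of $F$ for free). Since $\overline{G}$ is connected, centerless, and semisimple, $\overline{K}^{\max}$ is connected by the Cartan decomposition. The identity component $P^{\circ}$ still surjects onto $\overline{K}^{\max}$ under $p$, by a dimension count: $p(P^{\circ})$ is a connected Lie subgroup of $\overline{K}^{\max}$ of dimension $\dim P - \dim H = \dim \overline{K}^{\max}$, hence equal to $\overline{K}^{\max}$. Since $F$ is a compact submanifold of the compact $M'$, $\pi_0(F)$ is finite, so the monodromy action $\pi_1(B) \to \Aut(\pi_0(F))$ factors through a finite quotient. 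Passing to the corresponding finite cover of $M'$---equivalently, replacing $\Gamma'$ by a suitable finite-index subgroup---I can arrange that $F$ is connected and is given by $(\Gamma' \cap P^{\circ}) \backslash P^{\circ} / K$, with $P^{\circ}$ playing the role of $P$ in the statement. This yields the desired connected, (solvable-by-compact-by-abelian)-by-compact Lie group, completing the claim.
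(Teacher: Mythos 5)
Your identification of the fiber and your derivation of the algebraic structure of $P$ follow the paper's route exactly: you take the same $P = p^{-1}(\overline{K}^{\max})$, and the explicit coset computation showing $F = (\Gamma' \cap P)\backslash P / K$ is a welcome elaboration of a step the paper merely asserts. The extension $1 \to H \to P \to \overline{K}^{\max} \to 1$ and the resulting (solvable-by-compact-by-abelian)-by-compact structure are likewise identical to the paper's argument.

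The genuine problem is the connectedness of $P$, which your proposal does not actually establish: the claim asserts that $P$ itself is connected, whereas you sidestep this by passing to a further finite cover of $M'$ and substituting $P^{\circ}$ for $P$. That workaround is both under-justified and unnecessary. It is under-justified because the monodromy/finite-cover step is only sketched, and because the double coset $(\Gamma' \cap P^{\circ})\backslash P^{\circ}/K$ does not obviously make sense unless $K \subseteq P^{\circ}$, which you have not checked ($K$ is compact but need not be connected). It is unnecessary because you are one line away from the full statement: by Lemma~\ref{connected-lie-group-ses} the sequence $1 \to H \to G \to \overline{G} \to 1$ is a sequence of \emph{connected} Lie groups, so $H$ is connected and hence $H \subseteq P^{\circ}$; combined with your own dimension count showing $p(P^{\circ}) = \overline{K}^{\max}$, this gives $P = H \cdot P^{\circ} = P^{\circ}$, i.e.\ $P$ is connected. (The paper reaches the same conclusion differently: $P \to \overline{K}^{\max}$ is the restriction of the bundle $H \to G \to \overline{G}$ to the deformation retract $\overline{K}^{\max} \subset \overline{G}$ given by the Cartan decomposition, so $P$ is homotopy equivalent to the connected group $G$.) Either repair makes your argument complete; as written, the connectedness step is a gap.
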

\begin{proof}
	By Claim~\ref{fiber-bundle}, $F$ is compact.

	To complete the proof, note that the fiber $F$ is homeomorphic to the preimage at the identity double coset point in the base $B$, i.e.
	\[
		F = f^{-1}(\Id)
		= (\Gamma' \cap P) \backslash P / K.
	\]
	where $P = p^{-1}(\overline{K}^{\max})$.

	Note that $P$ is the total space of the fiber bundle given by restricting the base of the fiber bundle (given by the short exact sequence of the underlying Lie groups)
	\[
		H \to G \to \overline{G}
	\]
	to $\overline{K}^{\max}$.
	Thus,
	\begin{enumerate}
		\item there exists a short exact sequence of Lie groups
		\[
			1 \to H \to P \to \overline{K}^{\max} \to 1.
		\]
		By Lemma~\ref{connected-lie-group-ses}, $H$ is solvable-by-compact-by-abelian.
		Since $\overline{K}^{\max}$ is compact, thus $P$ is (solvable-by-compact-by-abelian)-by-compact.
		\item Since $\overline{G}$ is homotopy equivalent to $\overline{K}^{\max}$, thus $P$ is homotopy equivalent to $G$, and hence is connected.
	\end{enumerate}
	This completes the proof.
\end{proof}

Since Claims 1-4 form the statement of Theorem~\ref{lhm}, this completes the proof of Theorem~\ref{lhm}.

\end{document}